\newtheorem{thm}[equation]{Theorem}
\newtheorem{prop}[equation]{Proposition}
\newtheorem{lemma}[equation]{Lemma}
\newtheorem{cor}[equation]{Corollary}
\theoremstyle{definition}
\newtheorem{rem}[equation]{Remark}
\newcommand{\CH}{\mathop{\mathrm{CH}}\nolimits}
\newcommand{\OO}{\operatorname{\mathrm{O}}}
\newcommand{\Ch}{\mathop{\mathrm{Ch}}\nolimits}
\newcommand{\cdim}{\mathop{\mathrm{cdim}}\nolimits}
\newcommand{\mult}{\operatorname{mult}}
\newcommand{\Z}{\mathbb{Z}}
\newcommand{\F}{\mathbb{F}}
\newcommand{\Spec}{\operatorname{Spec}}
\newcommand{\pt}{\mathbf{pt}}
\newcommand{\disc}{\operatorname{disc}}
\newcommand{\compose}{\circ}
\renewcommand{\phi}{\varphi}
\title
{Incompressibility of orthogonal grassmannians}
\keywords
{Algebraic groups, quadratic forms,
projective homogeneous varieties,
Chow groups and motives.
{\em Mathematical Subject Classification (2010):}
14L17; 14C25}
\author
{Nikita A. Karpenko}
\address
{
UPMC Sorbonne Universit\'es\\
Institut de Math\'ematiques de Jussieu\\
F-75252 Paris\\
FRANCE}
\address
{{\it Web page:}
{\tt www.math.jussieu.fr/\~{ }karpenko}}
\email {karpenko {\it at} math.jussieu.fr}
\date
{July 2011. Revised: October 2011.}
\begin{document}

\begin{abstract}
We prove the following conjecture due to Bryant Mathews (2008).
Let $Q$ be the orthogonal grassmannian of totally isotropic $i$-planes
of a non-degenerate quadratic form $q$ over an arbitrary field (where $i$ is an integer
satisfying $1\leq i\leq(\dim q)/2$).
If the degree of each closed point on $Q$ is divisible by $2^i$ and the
Witt index of $q$ over the function field of $Q$ is equal to $i$, then the variety $Q$ is
$2$-incompressible.

\bigskip
\noindent
{\bf Incompressibilit\'e de grassmanniennes orthogonales.}
Nous d\'{e}montrons la conjecture ci-dessous due \`{a} Bryant Mathews (2008).
Soit $Q$ la grassmannienne orthogonale des $i$-plans totalement isotropes
d'une forme quadratique non-d\'{e}g\'{e}n\'{e}r\'{e}e $q$  sur un corps arbitraire (o\`{u}
$i$ est un entier satisfaisant $1\leq i\leq(\dim q)/2$).
Si le degr\'{e} de tout point ferm\'{e} sur $Q$ est divisible par $2^i$ et
l'indice de Witt de la forme $q$ au-dessus du corps des fonctions de $Q$
est \'{e}gal \`{a} $i$, alors la vari\'{e}t\'{e} $Q$ est $2$-incompressible.
\end{abstract}

\maketitle


Theorem \ref{main}, proved below, has been conjectured in the Ph.D. thesis
\cite[page 24]{MR2713320}
(a preprint with the conjecture appeared one year earlier).

We start with some development of the theory of canonical dimension of
general projective homogeneous varieties (which might be of independent interest).
We fix a prime $p$.
Let $G$ be a semisimple affine algebraic group over a field $F$
such that $G_E$ is of inner type for some finite galois field extension
$E/F$ of degree a power of $p$ ($E=F$ is allowed).
Let $X$ be a projective $G$-homogeneous $F$-variety.
We refer to \cite{canondim} for a definition and discussion of the notion
of {\em canonical $p$-dimension} $\cdim_pX$ of $X$.
Actually, canonical $p$-dimension is defined in the context of more general algebraic varieties.
For any irreducible smooth projective variety $X$, $\cdim_pX$ is the
minimal dimension of a closed subvariety $Y\subset X$ with
a $0$-cycle of $p$-coprime degree on $Y_{F(X)}$.
Recall that a smooth projective $X$ is {\em $p$-incompressible}, if it is
irreducible and $\cdim_pX=\dim X$.

\begin{prop}
\label{prop}
For $d:=\cdim_p X$, there exist a cycle class
$\alpha\in\CH^dX_{F(X)}$ (over $F(X)$) of codimension $d$ and a cycle class
$\beta\in\CH_dX$ (over $F$) of dimension $d$ such that the degree of the
product $\beta_{F(X)}\cdot\alpha$ is not divisible by $p$.
\end{prop}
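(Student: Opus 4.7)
The plan is to start from the definition of canonical $p$-dimension: since $\cdim_p X = d$, choose a closed subvariety $Y \subset X$ of dimension $d$ such that $Y_{F(X)}$ carries a $0$-cycle $z$ of degree coprime to $p$. The natural candidate for $\beta$ is then $[Y] \in \CH_d X$. With this choice $\beta_{F(X)} = [Y_{F(X)}]$, and the projection formula applied to the closed embedding $i : Y_{F(X)} \hookrightarrow X_{F(X)}$ yields
$$
\deg(\beta_{F(X)} \cdot \alpha) \;=\; \deg\bigl(i_*(i^*\alpha)\bigr) \;=\; \deg(i^*\alpha),
$$
so it suffices to produce $\alpha \in \CH^d X_{F(X)}$ whose restriction $i^*\alpha \in \CH^d Y_{F(X)} = \CH_0 Y_{F(X)}$ has degree coprime to $p$.

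To build $\alpha$ I would first spread $z$ out to a codimension-$d$ cycle $\tilde z$ on the product $Y \times X$, using that the restriction to the generic fibre of the second projection $Y \times X \to X$ is surjective (a standard consequence of the localization sequence for Chow groups). I would then try to further lift $\tilde z$ along the pullback $(i \times \id)^* \colon \CH^d(X \times X) \to \CH^d(Y \times X)$ to a class $\tilde\alpha \in \CH^d(X \times X)$, and set $\alpha$ to be the restriction of $\tilde\alpha$ to the generic fibre $X_{F(X)}$ of the second projection $X \times X \to X$. The square
$$
\CH^d(X \times X) \to \CH^d(X_{F(X)}),\qquad \CH^d(Y \times X) \to \CH^d(Y_{F(X)})
$$
commutes with the vertical maps $(i \times \id)^*$ and $i^*$, so this construction gives $i^*\alpha = z$ and hence $\deg(i^*\alpha) = \deg z$, coprime to $p$.

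The main obstacle is exactly this second lifting: $(i \times \id)^*$ is not surjective in general. The weaker target --- $\deg(i^*\alpha)$ coprime to $p$ rather than $i^*\alpha = z$ on the nose --- affords some room to maneuver; to exploit it, one likely uses the projective-homogeneous hypothesis on $X$ (coming from the group $G$ of inner type over a $p$-power extension) to access enough ``rational'' codimension-$d$ classes on $X \times X$, for instance through a Schubert-type description of the Chow groups. An equivalent reformulation is that $[Y_{F(X)}] \in \CH_d X_{F(X)}/p$ must pair nontrivially with $\CH^d X_{F(X)}/p$ under the degree pairing, which one should be able to derive from the minimality $\dim Y = \cdim_p X$: any degeneracy of the pairing would produce a smaller $Y'$ contradicting the definition.
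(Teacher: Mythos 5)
You have correctly isolated the crux of the matter --- producing a class $\alpha\in\CH^dX_{F(X)}$ that pairs to a $p$-coprime degree against a dimension-$d$ class defined over $F$ --- but your argument then stops exactly where the content begins. The lift along $(i\times\id)^*\colon\CH^d(X\times X)\to\CH^d(Y\times X)$ that you acknowledge you cannot perform is the entire point: there is no reason the $0$-cycle $z$ on $Y_{F(X)}$ guaranteed by the definition of $\cdim_pX$ should lie in the image of $i^*\colon\CH^dX_{F(X)}\to\CH_0Y_{F(X)}$, and the vague appeal to a ``Schubert-type description'' is not available here, since $X$ is not assumed generically split (that is precisely the case the proposition is meant to go beyond; in the generically split case this is essentially \cite[Theorem 5.8]{MR2258262}). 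Your closing reformulation is also unproven and, as stated, circular: that the degree pairing is nondegenerate on the class $[Y_{F(X)}]$ for a minimal $Y$ is essentially Corollary \ref{cor-prop}, which in the paper is a \emph{consequence} of Proposition \ref{prop}, not a route to it; and I do not see why a ``degeneracy'' would hand you a smaller $Y'$. Note also that the $\beta$ the paper ultimately constructs is not of the form $[Y]$ for a subvariety chosen in advance from the definition, so fixing $\beta=[Y]$ at the outset commits you to a claim that the paper never needs and that need not hold for your particular $Y$.

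The paper's actual mechanism is completely different and is where the $G$-homogeneity gets used: it invokes the theory of upper motives. Writing $U(X)$ for the upper motive, one knows from \cite[Theorem 5.1 and Proposition 5.2]{canondim} that $U(X)$ is a direct summand both of $M(X)$ and of $M(X)(d-m)$ with $m=\dim X$ and $d=\cdim_pX$. The two resulting compositions $M(X)\to U(X)\to M(X)(d-m)$ and $M(X)(d-m)\to U(X)\to M(X)$ are correspondences $f\in\Ch^d(X\times X)$ and $g\in\Ch_d(X\times X)$ whose composite $g\compose f$ is the projector cutting out $U(X)$, hence has multiplicity $1$. Taking $\alpha$ to be the restriction of $f$ to the generic fibre of the first projection and $\beta$ the pushforward of $g$ along the first projection, the identity $\deg(\beta_{F(X)}\cdot\alpha)\equiv\mult(g\compose f)\pmod p$ gives the result. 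This summand-shift theorem is the missing ingredient in your sketch, and there is no elementary substitute for it of the kind you propose.
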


\begin{proof}
We use Chow motives with coefficients in $\F_p:=\Z/p\Z$ (as defined in \cite[Chapter XII]{EKM}) and write
$\Ch$ for the Chow group $\CH$ modulo $p$.

Let $U(X)$ be the {\em upper} motive of $X$.
By definition, $U(X)$ is a direct summand of the motive $M(X)$ of $X$ such that $\Ch^0 U(X)\ne0$.
By \cite[Theorem 5.1 and Proposition 5.2]{canondim}, $U(X)$ is also a direct summand of $M(X)(d-m)$, where
$m:=\dim X$.
The composition
$$
M(X)\to U(X)\to M(X)(d-m)
$$
is given by a correspondence
$f\in\Ch^d(X\times X)$;
the composition
$$
M(X)(d-m)\to U(X)\to M(X)
$$
is given by a correspondence
$g\in\Ch_d(X\times X)$.
The composition of correspondences $g\compose f\in\Ch_m(X\times X)$ is a projector on $X$
such that $U(X)=(X,g\compose f)$.
In particular, the {\em multiplicity} $\mult(g\compose f)$ of the correspondence $g\compose
f$ is $1\in\F_p$.
Taking for $\alpha$ an integral representative of the pull-back of $f$ with respect to
the morphism
$$
\Spec F(X)\times X\to X\times X
$$ induced by the generic
point of the first factor, and taking for $\beta$
an integral representative of
the push-forward of $g$
with respect to the projection of $X\times X$ onto the first factor, we
get that
\begin{equation*}
\deg(\beta_{F(X)}\cdot\alpha)\pmod{p}=\mult(g\compose f)=1\in\F_p.
\qedhere
\end{equation*}
\end{proof}

\begin{cor}
\label{cor-prop}
The canonical $p$-dimension
$\cdim_p X$ of $X$ is the minimal integer $d$ such that there exists a cycle class
$\alpha\in\Ch^dX_{F(X)}$ and a cycle class
$\beta\in\Ch_dX$ with $\deg(\beta_{F(X)}\cdot\alpha)=1\in\F_p$.
\end{cor}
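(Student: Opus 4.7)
The plan is to prove the two inequalities $d' \le \cdim_p X$ and $\cdim_p X \le d'$, where $d'$ denotes the minimum of those integers $d$ for which classes $\alpha \in \Ch^d X_{F(X)}$ and $\beta \in \Ch_d X$ with $\deg(\beta_{F(X)}\cdot\alpha)=1\in\F_p$ exist.

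The inequality $d' \le \cdim_p X$ is essentially a restatement of Proposition \ref{prop}. That proposition provides integral cycles $\alpha_0 \in \CH^d X_{F(X)}$ and $\beta_0 \in \CH_d X$ with $d=\cdim_p X$ and $\deg(\beta_{0,F(X)}\cdot\alpha_0)\not\equiv 0\pmod p$. Reducing modulo $p$ and rescaling the image of $\alpha_0$ in $\Ch^d X_{F(X)}$ by the inverse, taken in $\F_p^\times$, of the resulting degree, one obtains $\alpha$ and $\beta$ witnessing $d' \le d$.

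For the reverse inequality, suppose we are given $\alpha \in \Ch^{d'} X_{F(X)}$ and $\beta \in \Ch_{d'} X$ with $\deg(\beta_{F(X)}\cdot\alpha)=1\in\F_p$. I would lift both classes to integral cycles $\tilde\alpha$, $\tilde\beta$ and write a representative of $\tilde\beta$ as an integral combination $\sum_i n_i[Y_i]$ of $d'$-dimensional closed irreducible subvarieties $Y_i\subset X$. From the congruence
$$
\sum_i n_i\deg\bigl([Y_i]_{F(X)}\cdot\tilde\alpha\bigr) \equiv 1 \pmod p,
$$
some summand must be coprime to $p$; fix such an index and set $Y:=Y_i$, with inclusion $\iota\colon Y\hookrightarrow X$. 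The projection formula then yields
$$
\deg\bigl([Y]_{F(X)}\cdot\tilde\alpha\bigr)
= \deg\bigl(\iota_{F(X),*}\iota_{F(X)}^*\tilde\alpha\bigr)
= \deg_{Y_{F(X)}}\bigl(\iota_{F(X)}^*\tilde\alpha\bigr),
$$
exhibiting a $0$-cycle on $Y_{F(X)}$ of degree coprime to $p$. As $\dim Y=d'$, the geometric definition of canonical $p$-dimension gives $\cdim_p X\le d'$, completing the proof.

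No step here is genuinely difficult; the corollary merely translates between the geometric definition of $\cdim_p$ and its cohomological characterization via the intersection pairing on Chow groups modulo $p$, with Proposition \ref{prop} supplying the only substantive content. The one thing to watch is that one must pick a single irreducible component of the support of $\tilde\beta$ in the converse direction, which is forced by the linearity of the degree pairing and the fact that $1\ne 0$ in $\F_p$.
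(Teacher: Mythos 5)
Your proof is correct and takes essentially the same route as the paper's: one direction is read off from Proposition~\ref{prop}, and for $\cdim_pX\le d'$ one decomposes a representative of $\beta$ into prime cycles, selects an irreducible $d'$-dimensional $Y$ with $\deg([Y]_{F(X)}\cdot\alpha)$ prime to $p$, and observes that this intersection is represented by a $0$-cycle on $Y_{F(X)}$. The only cosmetic difference is that the paper states this last fact directly while you phrase it via the projection formula; since $Y$ need not be smooth, your $\iota_{F(X)}^*$ should be understood as the refined Gysin pullback available because the ambient $X$ is smooth, which is precisely the content of ``the product can be represented by a cycle on $Y_{F(X)}$.''
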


\begin{proof}
We only need to show that $\cdim_pX\leq d$.
The proof is similar to \cite[Proof of $\leq$ in Theorem 5.8]{MR2258262}.
Since $\deg(\beta_{F(X)}\cdot\alpha)=1\in\F_p$ for some $\beta\in\Ch_dX$ (and some $\alpha$),
there exists a closed irreducible $d$-dimensional subvariety $Y\subset X$
such that $\deg([Y]_{F(X)}\cdot\alpha)\ne0\in\F_p$ (with the same $\alpha$).
Since the product $[Y]_{F(X)}\cdot\alpha$ can be represented by a cycle on
$Y_{F(X)}$, the variety $Y_{F(X)}$ has a $0$-cycle of $p$-coprime degree.
Therefore $\cdim_pX\leq\dim Y=d$.
\end{proof}

\begin{cor}
\label{cor}
In the situation of Proposition \ref{prop},
for any field extension $L/F$, the change of field homomorphism
$\Ch_dX\to\Ch_dX_L$ is non-zero.
\end{cor}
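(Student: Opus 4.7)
The plan is to exploit the cycles produced by Proposition~\ref{prop}. Let $\alpha\in\Ch^dX_{F(X)}$ and $\beta\in\Ch_dX$ be (the mod $p$ reductions of) the classes provided there, so that $\deg(\beta_{F(X)}\cdot\alpha)=1\in\F_p$. The goal is to show that this specific $\beta$ remains non-zero after base change to any $L/F$, which will establish the claim.

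First I would construct a field $M/F$ into which both $L$ and $F(X)$ embed as $F$-subalgebras. The natural candidate is $M:=L(X_L)$: the $L$-structure on $X_L$ gives $L\hookrightarrow M$, and, since $X$ is geometrically irreducible (being smooth and geometrically connected as a homogeneous space under the connected group $G$), the scheme $X_L$ is irreducible and the projection $X_L\to X$ carries its generic point to the generic point of $X$, producing $F(X)\hookrightarrow M$. These two inclusions coincide on $F$.

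Next I would base-change the equality $\deg(\beta_{F(X)}\cdot\alpha)=1\in\F_p$ from $F(X)$ up to $M$. Since degrees of zero-cycles are preserved under field extensions, one obtains $\deg_M(\beta_M\cdot\alpha_M)=1\in\F_p$ as well. On the other hand, by functoriality of base change along $F\to L\to M$, the class $\beta_M\in\Ch_dX_M$ equals $(\beta_L)_M$. Hence the assumption $\beta_L=0$ in $\Ch_dX_L$ would force $\beta_M=0$ and the degree to vanish, a contradiction.

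The only substantive point to verify is the compatibility of the two routes $F\to L\to M$ and $F\to F(X)\to M$, which rests on the statement that the generic point of $X_L$ lies above the generic point of $X$. This is where geometric irreducibility of the homogeneous variety $X$ enters; the rest of the argument is formal functoriality of pull-back together with the preservation of degree under base change of the ground field.
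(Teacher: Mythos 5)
Your proof is correct and is essentially the paper's own one-line argument, spelled out in more detail: the paper also observes that $\beta_L\neq 0$ because its further base change $\beta_{L(X)}$ pairs to degree $1$ with $\alpha_{L(X)}$ (your $M=L(X_L)$ is exactly this $L(X)$, and your discussion of geometric irreducibility fills in why $F(X)\hookrightarrow L(X)$ makes sense). No genuine difference in approach.
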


\begin{proof}
The image of $\beta\in\Ch_dX$ in $\Ch_dX_L$ is non-zero because
$\deg(\beta_{L(X)}\cdot\alpha_{L(X)})=1$.
\end{proof}

\begin{rem}
If the variety $X$ is {\em generically split} (meaning that the motive of
$X_{F(X)}$ is a sum of Tate motives which, in particular, implies that the adjoint algebraic group
acting on $X$ is of
inner type), then \cite[Theorem 5.8]{MR2258262} says that $\cdim_pX$ is
the minimal $d$ with non-zero $\Ch_dX\to\Ch_dX_L$ for any $L$.
Corollary \ref{cor} can be considered as a generalization of a part of \cite[Theorem 5.8]{MR2258262}
to the case of a projective $G$-homogeneous variety $X$ which is not
necessarily generically split with $G$ not necessarily of inner type.
Note that the statement of \cite[Theorem 5.8]{MR2258262} in whole fails in
such generality.
Corollary \ref{cor-prop} is its correct replacement
(giving the original statement in the case of generically split $X$).
\end{rem}

\begin{lemma}
\label{lemma}
In the situation of Proposition \ref{prop}, let
$\alpha,\alpha'\in\Ch^dX_{F(X)}$ and
$\beta,\beta'\in\Ch_dX$ be cycle classes with
$\deg(\beta_{F(X)}\cdot\alpha)=1=\deg(\beta'_{F(X)}\cdot\alpha')$.
Then
$$
\deg(\beta_{F(X)}\cdot\alpha')\ne0\ne\deg(\beta'_{F(X)}\cdot\alpha).
$$
\end{lemma}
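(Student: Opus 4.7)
The plan is to reduce the lemma to the determinantal identity
\[
\deg(\beta_{F(X)}\cdot\alpha)\cdot\deg(\beta'_{F(X)}\cdot\alpha') \;=\; \deg(\beta_{F(X)}\cdot\alpha')\cdot\deg(\beta'_{F(X)}\cdot\alpha)
\]
in $\F_p$. Since the left-hand side equals $1\cdot 1=1$, this forces the product on the right to equal $1$, and hence both cross-degrees to be non-zero, which is precisely the conclusion of the lemma.

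To establish this identity, I would first lift $\alpha,\alpha'\in\Ch^d(X_{F(X)})$ to $f,f'\in\Ch^d(X\times X)$ via surjectivity of restriction to the generic fibre of the first projection. A short computation using the projection formula and flat base change gives
\[
\deg(\beta_{F(X)}\cdot f''|_{\eta})\;=\;\mult(f''\cdot p_2^*\beta)
\]
for any $f''\in\Ch^d(X\times X)$ and $\beta\in\Ch_d X$, so that the four degrees in the statement are realised as multiplicities of the elements $X_{ij}:=f_i\cdot p_2^*\beta_j\in\Ch_m(X\times X)=\End(M(X))$, where $(\alpha_1,\beta_1)=(\alpha,\beta)$ and $(\alpha_2,\beta_2)=(\alpha',\beta')$. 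This step circumvents the fact that $\beta\in\Ch_d X$ need not lift to a cycle $g\in\Ch_d(X\times X)$ with $(p_1)_*g=\beta$, which may genuinely fail in our setting (e.g.\ when $X$ carries no $p$-coprime-degree zero-cycle).

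The key input, proved by a direct flat base-change computation in the triple-product composition diagram, is that $\mult\colon\End(M(X))\to\F_p$ is a ring homomorphism: $\mult(\gamma\compose\delta)=\mult(\gamma)\mult(\delta)$. Combined with the projector $\pi_0$ from the proof of Proposition \ref{prop}, this makes $\End(U(X))\cong\pi_0\cdot\End(M(X))\cdot\pi_0$ into a local $\F_p$-algebra (by indecomposability of $U(X)$) with residue field $\F_p$, and the residue map coincides with $\mult$. The concluding step---which I expect to be the main obstacle---is to deduce from this the determinantal identity $\mult(X_{11})\mult(X_{22})=\mult(X_{12})\mult(X_{21})$. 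My strategy is to show that the images in $\F_p$ of the ``rank-one'' correspondences $X_{ij}$ (each acting on $\Ch_d X$ as $z\mapsto f_{i,*}(z)\beta_j$) factorise as a product $\phi(f_i)\psi(\beta_j)$ of linear functionals depending only on the first, respectively second, index, whence the determinantal identity is formal. This factorisation encodes the one-dimensionality of the upper motive's contribution to the pairing and rests on careful use of the multiplicativity of $\mult$ together with the local structure of $\End(U(X))$; isolating it is where essentially all of the content of the argument lies.
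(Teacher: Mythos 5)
Your reduction of the lemma to the identity $\deg(\beta\alpha)\deg(\beta'\alpha')=\deg(\beta\alpha')\deg(\beta'\alpha)$ is legitimate as a strategy (if true, it does imply the lemma), and your preparatory steps are sound: the lift of $\alpha,\alpha'$ to correspondences $f,f'$, the identity $\deg(\beta_{F(X)}\cdot f''|_\eta)=\mult(f''\cdot p_2^*\beta)$, and the multiplicativity $\mult(\gamma\compose\delta)=\mult(\gamma)\mult(\delta)$ are all correct, and the device of writing $X_{ij}=f_i\cdot p_2^*\beta_j$ to avoid lifting the $\beta$'s is a nice observation. But you have not proved the lemma, and you say as much yourself: the factorisation $\mult(X_{ij})=\phi(f_i)\psi(\beta_j)$, which is the entire mathematical content of the argument, is left as ``the main obstacle.'' Nothing you have established so far gives it. Note in particular that multiplicativity of $\mult$ cannot help here: it reduces your determinantal identity to $\mult(X_{11}\compose X_{22})=\mult(X_{12}\compose X_{21})$, but then expanding each side by multiplicativity just returns the original identity, so the argument is circular. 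The locality of $\End(U(X))$ by itself does not yield the factorisation either; the $X_{ij}$ live in $\End(M(X))$, not in $\pi_0\End(M(X))\pi_0$, and you would still need to show that the pairing $(\alpha,\beta)\mapsto\deg(\beta_{F(X)}\alpha)$ factors through a one-dimensional quotient in the relevant range --- which is essentially a restatement of what must be proved.

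It is also worth flagging that your target statement is \emph{a priori} stronger than the lemma: the lemma asserts only that the two cross-degrees are non-zero, whereas you want their product to equal $1\in\F_p$. Whether the stronger identity holds, it is not something the paper's proof establishes or needs. The paper takes a different and more concrete route: it passes to an algebraic closure, where the surjectivity of pull-back to the generic fibre produces \emph{rational} cycle classes of the explicit shape $[\bar X]\times\bar\alpha+\dots+\bar\gamma\times[\bar X]$; it then multiplies, transposes, and composes these to obtain rational correspondences of the form $[\bar X]\times[\pt]+\dots+\bar\alpha\times\bar\beta$ and $[\bar X]\times[\pt]+\dots+\bar{\alpha'}\times\bar{\beta'}$, and reads off $\deg(\beta'\alpha)$ and $\deg(\beta\alpha')$ as the coefficient of the top-codimension piece of their two compositions. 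Non-vanishing of that coefficient is then extracted from the fact that a suitable power of each of these rational correspondences is a multiplicity-$1$ projector cutting out the upper motive, together with \cite[Theorem 5.1]{canondim} and the equality $d=\cdim_pX$. That step --- forcing the top-degree coefficient of a rational multiplicity-$1$ cycle to be non-zero --- is the actual engine of the proof, and it is exactly what is missing from your sketch. To complete your approach you would either have to reproduce an argument of this kind or find a genuinely new reason for the one-dimensionality you postulate; at present there is a real gap.
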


\begin{proof}
We fix an algebraically closed field containing $F(X)$ and write $\bar{\cdot}$
when considering a variety or a cycle class over that field.
The surjectivity of the pull-back with respect to the
flat morphism $\Spec F(X)\times X\to X\times X$ induced by the generic point of the first factor of the product $X\times
X$, tells us that the group $\Ch^d(\bar{X}\times\bar{X})$ contains a
{\em rational} (i.e., ``coming from $F$'') cycle class of the form
$[\bar{X}]\times\bar{\alpha}+\dots+\bar{\gamma}\times[\bar{X}]$
with some $\gamma\in\Ch^dX_{F(X)}$, where $\dots$ is in the sum of products
$\Ch^i\bar{X}\otimes\Ch^j\bar{X}$ with $0<i,j<d$ and $i+j=d$.
Multiplying by $[\bar{X}]\times\bar{\beta}$, we get a rational cycle class
of the form $[\bar{X}]\times[\pt]+\dots+\bar{\gamma}\times\bar{\beta}$,
where $\pt$ is a rational point on $\bar{X}$ and $\dots$ is now in the sum
of $\Ch^i\bar{X}\otimes\Ch_i\bar{X}$ with $0<i<d$.
The composition of the obtained correspondence with itself equals
$[\bar{X}]\times[\pt]+\dots+\deg(\gamma\cdot\beta)(\bar{\gamma}\times\bar{\beta})$.
Since an appropriate power of this correspondence is a multiplicity $1$ {\em
projector} (cf. \cite[Corollary 3.2]{hypernew-tignol} or \cite{upper}) and
$d=\cdim_pX$, it follows by \cite[Theorem 5.1]{canondim} that
$\deg(\gamma\cdot\beta)\ne0$.
Now multiplying $[\bar{X}]\times\bar{\alpha}+\dots+\bar{\gamma}\times[\bar{X}]$
by $\bar{\beta}\times[\bar{X}]$, transposing, and raising to ($p-1$)th power (by means of composition of
correspondences), we get a rational cycle of the form
$[\bar{X}]\times[\pt]+\dots+\bar{\alpha}\times\bar{\beta}$.

Similarly, there is a rational cycle of the form
$[\bar{X}]\times[\pt]+\dots+\bar{\alpha'}\times\bar{\beta'}$.
One of its compositions with the previous one produces
$[\bar{X}]\times[\pt]+\dots+\deg(\beta'\cdot\alpha)(\bar{\alpha'}\times\bar{\beta})$,
therefore $\deg(\beta'\cdot\alpha)\ne0\in\F_p$.
The other composition produces
$[\bar{X}]\times[\pt]+\dots+\deg(\beta\cdot\alpha')(\bar{\alpha}\times\bar{\beta'})$,
so that $\deg(\beta\cdot\alpha')\ne0$.
\end{proof}

We specify as follows the settings of Proposition \ref{prop}.
The prime $p$ is now $2$.
The algebraic group $G$ is now $\OO^+(q)$
(in notation of \cite[\S23]{MR1632779})
for a non-degenerate quadratic
form $q$ (one may take $E=F$ if $\dim q$ is odd or $\disc q$ is
trivial, otherwise
$E$ can be the quadratic field extension of $F$ given by the discriminant of $q$).
We set $n:=\dim q$.
For any integer $i$ with $0\leq i\leq n/2$, let $Q_i$ be the
variety of $i$-dimensional totally isotropic subspaces in $q$.
In particular, $Q_0=\Spec F$.
For $i$ with $0<i<n/2$, $Q_i$ is a projective $G$-homogeneous
variety.

\begin{cor}
\label{cor6}
If $\cdim_2 Q_i=\cdim_2 Q'_{i-1}=\dim Q'_{i-1}$ for some some $i$ with
$0<i<n/2$, where $Q'_{i-1}$ is the orthogonal grassmannian of totally isotropic ($i-1$) planes of
a ($n-2$)-dimensional quadratic form $q'$ over $F(Q_1)$ Witt-equivalent to $q_{F(Q_1)}$,
then $\deg\CH Q_i\ni2^{i-1}$.
\end{cor}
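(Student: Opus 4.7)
Setting $d := \cdim_2 Q_i = \dim Q'_{i-1}$ and $Y := Q'_{i-1}$, the plan is to combine cycles provided by Proposition~\ref{prop} applied to both $Q_i$ and $Y$, with the geometric relation between them, and with a classical Pl\"ucker-type cycle on $Y$ of degree $2^{i-1}$, to produce a $0$-cycle on $Q_i$ whose degree has $2$-adic valuation equal to $i - 1$.

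First, applying Proposition~\ref{prop} to $X = Q_i$ with $p = 2$ yields cycles $\beta \in \CH_d Q_i$ and $\alpha \in \CH^d (Q_i)_{F(Q_i)}$ with $\deg(\beta_{F(Q_i)} \cdot \alpha) \equiv 1 \pmod{2}$; by Corollary~\ref{cor}, $\beta_{F(Q_1)} \neq 0$ in $\Ch_d (Q_i)_{F(Q_1)}$. The condition $\dim q' = n - 2$ forces the Witt index of $q_{F(Q_1)}$ to equal exactly $1$, so $q_{F(Q_1)} \simeq \HH \perp q'$; the distinguished isotropic vector $e \in \HH$ provides a closed embedding $\iota\colon Y \hookrightarrow (Q_i)_{F(Q_1)}$ sending $V' \subset q'$ to $\langle e\rangle \oplus V' \subset q_{F(Q_1)}$, whose image has codimension $n - i - 1$ and coincides with the generic fiber of the incidence variety $T = \{(\ell, V) \in Q_1 \times Q_i : \ell \subset V\}$ over $Q_1$. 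Crucially, $F(Q_1)(Y) = F(T)$ is purely transcendental over $F(Q_i)$ of transcendence degree $i - 1$, since $T \to Q_i$ is a $\PP^{i-1}$-bundle.

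The comparison step uses the $2$-incompressibility of $Y$: Proposition~\ref{prop} applied to $Y$ over $F(Q_1)$ yields $\alpha_Y \in \CH_0 Y_{F(T)}$ of odd degree, which pushes forward via $\iota$ to a $0$-cycle of odd degree on $(Q_i)_{F(T)}$. Transferring back along the purely transcendental extension $F(T)/F(Q_i)$ (which preserves $2$-adic information on degrees of $0$-cycles) and applying Lemma~\ref{lemma} (to compare the pair $(\beta, \alpha)$ with pairs built from $\iota_*[Y]$ and $\iota_*\alpha_Y$), one concludes that modulo $2$ and up to cycle classes of strictly smaller-dimensional support, $\beta_{F(Q_1)}$ is an odd multiple of $\iota_*[Y]$. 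In motivic terms, the hypothesis $\cdim_2 Q_i = \cdim_2 Y$ forces the upper motive $U(Q_i)$ to become, after Tate twist, a summand of $U(Y)$ over $F(Q_1)$, concentrating the $d$-dimensional motivic support of $Q_i$ over $F(Q_1)$ on $\iota(Y)$.

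To finish, choose an $F$-rational codimension-$d$ cycle $\gamma \in \CH^d Q_i$ whose restriction to $\iota(Y) \cong Y$ realizes a $0$-cycle of degree $2^{i-1}$ on the orthogonal Grassmannian $Q'_{i-1}$ (such a cycle can be constructed from suitable powers of the hyperplane class under the Pl\"ucker embedding, reflecting the classical degree $2^{i-1}$ associated to Grassmannians of $(i-1)$-planes in quadratic forms). By the projection formula and the preceding comparison, $\deg(\beta \cdot \gamma)$ equals $2^{i-1}$ times an odd integer, giving $2^{i-1} \in \deg \CH Q_i$. The main obstacle is the comparison in the previous paragraph: establishing that $\beta_{F(Q_1)}$ is, for the purpose of pairing with rational codimension-$d$ classes, effectively an odd multiple of $\iota_*[Y]$. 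This relies on the motivic rigidity implied by $\cdim_2 Q_i = \dim Y$, careful use of Lemma~\ref{lemma}, and the fact that the purely transcendental extension $F(T)/F(Q_i)$ allows cycle classes to be lifted without disturbing $2$-adic degree information.
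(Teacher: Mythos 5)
Your high-level strategy---pair the cycle $\beta$ from Proposition~\ref{prop} with a suitable cycle coming from $Y=Q'_{i-1}$, route everything through the incidence variety $T=Q_{1\subset i}$ (whose function field is purely transcendental over $F(Q_i)$), and extract a factor of $2^{i-1}$---is indeed the spirit of the paper's proof, and several of your intermediate observations (the closed embedding $\iota$ over $F(Q_1)$, the $\PP^{i-1}$-bundle $T\to Q_i$) are correct. However, the proposal leaves the two load-bearing steps unjustified, and one of them rests on a claim that is almost certainly wrong.

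First, the step you yourself flag as ``the main obstacle'': the assertion that ``$\beta_{F(Q_1)}$ is, for the purpose of pairing with rational codimension-$d$ classes, effectively an odd multiple of $\iota_*[Y]$''. This is precisely what needs an argument, and you never supply one. In the paper this difficulty is handled by \emph{explicitly} producing a second pair $(\beta',\alpha')$ over $F(Q_1)$, with $\beta'\in\CH_d Q_{i\,F(Q_1)}$ obtained by transferring the rational point class $l_0$ on $Q_{1\,F(Q_1)}$ along $Q_{1\subset i}\to Q_1,\;Q_i$, with $\alpha'=z_{i-1}\cdots z_1\in\CH^d Q_{i\,F(Q_1)(Q_i)}$ built from similar transfers of the linear classes $l_j$, and with $\deg(\beta'_{F(Q_1)(Q_i)}\cdot\alpha')=1$ checked via \cite[Statement 2.15]{Vishik-u-invariant}. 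Only then does Lemma~\ref{lemma} apply---to the two concrete pairs $(\beta_{F(Q_1)},\alpha_{F(Q_1)(Q_i)})$ and $(\beta',\alpha')$---and it yields exactly the needed oddness: $\deg(\bar\beta\cdot\bar{\alpha'})$ is odd. Your sketch gestures at Lemma~\ref{lemma} but never identifies the two pairs it is applied to, which is what makes the lemma bite.

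Second, and more seriously, the source of $2^{i-1}$. You propose to find an $F$-rational $\gamma\in\CH^d Q_i$ restricting to a $0$-cycle of degree $2^{i-1}$ on $Y$, justified by ``the classical degree $2^{i-1}$ associated to Grassmannians of $(i-1)$-planes in quadratic forms'' under Pl\"ucker embedding. This is not a correct description of the Pl\"ucker degree of orthogonal grassmannians (already for $i-1=2$ the degree of $OG(2,n)$ is not $4$), and no such rational $\gamma$ of the kind you need is known to exist a priori. The actual $2^{i-1}$ in the paper enters differently: each $z_j$ is defined over $F(Q_1)(Q_i)$ only, but $2z_j$ is $F$-rational because $2l_j=h^{n-2-j}$ on the quadric $Q_1$ (with $h$ the hyperplane class) is $F$-rational, and the transfer along $Q_{1\subset i}$ preserves rationality. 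Hence $2^{i-1}\bar{\alpha'}=(2\bar z_1)\cdots(2\bar z_{i-1})$ is $F$-rational, and since $\bar\beta$ is also $F$-rational, the $0$-cycle $2^{i-1}\bar\beta\cdot\bar{\alpha'}$ is $F$-rational of degree $2^{i-1}\cdot(\text{odd})$, which gives the conclusion. Your argument would need to replace the Pl\"ucker heuristic with this explicit rationality mechanism on the quadric $Q_1$.

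In short: right skeleton, but the two technical pillars---the concrete second pair $(\beta',\alpha')$ with its odd pairing verified via Vishik's statement, and the rationality of $2^{i-1}\alpha'$ via $2l_j$ on the quadric---are both missing, and the Pl\"ucker-degree substitute for the latter does not hold.
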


\begin{proof}
The statement being trivial for $i=1$, we may assume that $i\geq2$.

For $d:=\cdim_2Q_i$, using Proposition \ref{prop},
we choose some $\alpha\in\CH^dQ_{i\,F(Q_i)}$ and
$\beta\in\CH_dQ_i$ with odd $\deg(\beta_{F(Q_i)}\cdot\alpha)$.
Note that $\cdim_2Q_{i\,F(Q_1)}=\cdim_2Q'_{i-1}=d$.
We construct some special $\alpha'\in\CH^dQ_{i\,F(Q_1)(Q_i)}$ and
$\beta'\in\CH_dQ_{i\,F(Q_1)}$ with $\deg(\beta'_{F(Q_1)(Q_i)}\cdot\alpha')=1$ as follows.
Let $Q_{1\subset i}$ be the variety of $(1,i)$-flags of totally isotropic
subspaces in $q$ together with the evident projections $Q_{1\subset i}\to
Q_1,Q_i$.
We define $\beta'$ as the pull-back via $Q_{1\subset i\,F(Q_1)}\to Q_{1\,F(Q_1)}$ followed by the
push-forward via $Q_{1\subset i\,F(Q_1)}\to Q_{i\,F(Q_1)}$ of the rational point class $l_0$ on
$Q_{1\,F(Q_1)}$.
We define $\alpha'$ as the product $z_{i-1}\dots z_1$, where $z_j$ is the pull-back via
$Q_{1\subset i\,F(Q_1)(Q_i)}\to Q_{1\,F(Q_1)(Q_i)}$ followed by the
push-forward via $Q_{1\subset i\,F(Q_1)(Q_i)}\to Q_{i\,F(Q_1)(Q_i)}$ of the class $l_j$ of a $j$-dimensional projective
subspace on $Q_{1\,F(Q_1)(Q_i)}$.
The degree condition on $\alpha'$ and $\beta'$ is satisfied by
\cite[Statement 2.15]{Vishik-u-invariant}.
Fixing an algebraically closed field containing $F(Q_1)(Q_i)$,
we see by Lemma \ref{lemma} that the product
$\bar{\beta}\cdot\bar{\alpha'}$ is an odd degree $0$-cycle class on $\bar{Q_i}$.
Moreover, the class $\bar{\beta}$ is rational.
Since $2\bar{z_j}$ is rational for every $j<(n-2)/2$ (by the reason that $2l_j$ is rational),
the class $2^{i-1}\bar{\beta}\bar{\alpha}$
is also rational and it follows that $2^{i-1}\in\deg\CH Q_i$.
\end{proof}

We come to the main result of this note.
It is known for $i=1$ by \cite{MR1992016} (the proof is essentially contained already
in \cite{vishik-summands}; the characteristic $2$ case has been treated later on in \cite{EKM}).
The case of maximal $i$, i.e., of $i=[n/2]$, is also known and is
discussed in the beginning of the proof.
For $i=2$ and odd-dimensional $q$, it has been proved in \cite{MR2713320}
(the proof for $i=2$ given here is different; in particular, it does not make use of the motivic decompositions
of \cite{MR2264459} for products of projective homogeneous varieties).

\begin{thm}
\label{main}
Let $q$ be a non-degenerate quadratic form over a field $F$.
Let $i$ be an integer satisfying $1\leq i\leq (\dim q)/2$.
If the degree of every closed point on $Q_i$ is divisible by $2^i$ and
the Witt index of the quadratic form $q_{F(Q_i)}$ equals $i$, then the variety
$Q_i$ is {\em $2$-incompressible} (i.e., $\cdim_2 Q_i=\dim Q_i$).
\end{thm}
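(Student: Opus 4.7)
I would argue by induction on $i$. The cases $i=1$ and $i=[n/2]$ are known (and recorded at the start of the proof), so fix $1<i<n/2$ and assume the theorem for the pair $(q',i-1)$ over $F(Q_1)$, where $q'$ is the $(n-2)$-dimensional anisotropic form Witt-equivalent to $q_{F(Q_1)}$. Suppose for contradiction that $\cdim_2 Q_i<\dim Q_i$. Under the divisibility hypothesis of the theorem one has $\deg\CH Q_i\subset 2^i\Z$, so $2^{i-1}\notin\deg\CH Q_i$; hence Corollary~\ref{cor6} forbids the conjunction $\cdim_2 Q_i=\cdim_2 Q'_{i-1}=\dim Q'_{i-1}$. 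The plan is to prove exactly this conjunction, thereby producing a contradiction that forces $\cdim_2 Q_i=\dim Q_i$.

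First I would verify that $F(Q_1)$ raises the Witt index of $q$ by exactly $1$, so that $q'$ has dimension $n-2$ and Corollary~\ref{cor6} is available; a larger jump would make $Q_{i,F(Q_1)}$ too isotropic, and pulling a low-degree closed point back through the flag variety $Q_{1\subset i}$ used in the proof of Corollary~\ref{cor6} would contradict the divisibility assumption. Next I would transfer the hypotheses of the theorem from $(q,i)$ over $F$ to $(q',i-1)$ over $F(Q_1)$: each closed point of $Q'_{i-1}$ has degree divisible by $2^{i-1}$ (flag-variety pushforward from $Q_i$), and the Witt index of $q'$ over $F(Q_1)(Q'_{i-1})$ equals $i-1$ (translate the Witt-index condition on $q_{F(Q_i)}$ through the same correspondence). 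The induction hypothesis then yields $\cdim_2 Q'_{i-1}=\dim Q'_{i-1}$.

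The remaining and main step is the coincidence $\cdim_2 Q_i=\cdim_2 Q'_{i-1}$. The inequality $\cdim_2 Q_i\geq\cdim_2 Q'_{i-1}$ should come from a motivic comparison between $Q_{i,F(Q_1)}$ and a tower over $Q'_{i-1}$ arising from the two projections of $Q_{1\subset i}$ (whose fibres are cellular once an isotropic line is chosen), combined with the fact that base change to $F(Q_1)$ does not raise $\cdim_2$. For the reverse inequality the standing assumption $\cdim_2 Q_i<\dim Q_i$ is essential: the non-maximality of $\cdim_2 Q_i$ should force the upper motive $U(Q_i)$ to have a shift of at least $n-i-1=\dim Q_i-\dim Q'_{i-1}$, whence $\cdim_2 Q_i\leq\dim Q'_{i-1}$. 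With the two inequalities in place together with the inductive equality $\cdim_2 Q'_{i-1}=\dim Q'_{i-1}$, Corollary~\ref{cor6} produces $2^{i-1}\in\deg\CH Q_i$, contradicting the divisibility assumption. The most delicate point is precisely this shift-by-$(n-i-1)$ phenomenon; it is not a mere dimension count and I expect it to require controlling $U(Q_i)$ by means of Proposition~\ref{prop} and Lemma~\ref{lemma} applied to cycles pulled back and pushed forward through the flag variety $Q_{1\subset i}$, in the spirit of the construction of $\alpha'$ and $\beta'$ in the proof of Corollary~\ref{cor6}.
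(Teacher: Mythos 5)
Your overall framework matches the paper's: you correctly reduce to $(q',i-1)$ over $F(Q_1)$, transfer both hypotheses of the theorem to the smaller pair, invoke the induction hypothesis to get $\cdim_2 Q'_{i-1}=\dim Q'_{i-1}$, and identify Corollary~\ref{cor6} as the mechanism that turns $\cdim_2 Q_i=\cdim_2 Q'_{i-1}=\dim Q'_{i-1}$ into a contradiction with the divisibility hypothesis. (Inducting on $i$ versus on $n=\dim q$ is immaterial, since the reduction drops both at once.)

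The genuine gap is in the crucial step: under the standing assumption $\cdim_2 Q_i<\dim Q_i$, showing $\cdim_2 Q_i=\cdim_2 Q'_{i-1}$. You flag it as ``the most delicate point'' but leave it as an expectation, and the tools you propose --- Proposition~\ref{prop} and Lemma~\ref{lemma} applied to cycles pushed and pulled through $Q_{1\subset i}$ --- are not what makes it go. The paper's argument is more structural: it uses the known motivic decomposition over $F(Q_1)$,
\[
M(Q_{i\,F(Q_1)}) \simeq M(Q'_{i-1})\oplus M(Q'_i)\big((\dim Q_i-\dim Q'_i)/2\big)\oplus M(Q'_{i-1})(\dim Q_i-\dim Q'_{i-1}),
\]
then decomposes $M(Q'_{i-1})$ and $M(Q'_i)$ into shifted upper motives $U(Q'_j)$ (and $U(Q'_{j\,K})$) via \cite[Theorem~1.1]{outer}, and --- the step your sketch never isolates --- uses the Witt-index hypothesis on $q_{F(Q_i)}$ to rule out $U(Q'_{i-1})\simeq U(Q'_j)$ for every $j\geq i$. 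Only this shows that $U(Q'_{i-1})$ appears in the complete decomposition of $M(Q_{i\,F(Q_1)})$ precisely at shift $0$ and shift $\dim Q_i-\dim Q'_{i-1}$, which is what powers the dichotomy: if $U(Q_i)_{F(Q_1)}$ picks up the far-shifted copy, $\cdim_2 Q_i=\dim Q_i$ by \cite[Theorem~5.1]{canondim} and you are done directly; if not, $\cdim_2 Q_i=\cdim_2 Q'_{i-1}=\dim Q'_{i-1}$ by \cite[Lemma~1.2 and Remark~1.3]{gog} and Corollary~\ref{cor6} closes the argument. Your split into two inequalities via a ``tower over $Q'_{i-1}$'' and a ``forced shift of at least $n-i-1$'' is not only unproved but also misattributes the direction of the logic: in the paper the presence of the far shift in $U(Q_i)$ gives \emph{maximality}, not the other way round. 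Without the motivic decomposition, the upper-motive classification, and the use of the Witt-index hypothesis to separate $U(Q'_{i-1})$ from higher $U(Q'_j)$, the argument does not close.
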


\begin{proof}
We set $n:=\dim q$.
Note that for $i=n/2$ (and even $n$) the condition on closed points on
$Q_{n/2}$ ensures that $\disc q$ is non-trivial.
In particular, $Q_{n/2}$ is irreducible.

In general, for even $n$, the variety $Q_{n/2}$ is isomorphic to the orthogonal grassmannian of
totally isotropic ($n/2-1$)-planes of $q^1$ considered as a variety over $F$, where $q^1$ is any
$1$-codimensional non-degenerate subform in $q_K$, and $K$ is the quadratic
\'etale $F$-algebra given by the discriminant of $q$.
Therefore the statement of Theorem \ref{main} for $i=n/2$ follows from the
statement for $i=(n-1)/2$.
By this reason, we do not consider the case of $i=n/2$ below.
In particular, $Q_i$ below is a projective $G$-homogeneous variety.

We induct on $n$.
There is nothing to prove for $n<3$.
Below we are assuming that $n\geq3$.

Over the field $F(Q_1)$, the motive of $Q_{i\,F(Q_1)}$ decomposes as
follows (cf. \cite{gog}, \cite{MR1758562} or \cite{MR2178658}):
$$
M(Q'_{i-1})\oplus M(Q'_{i})\big((\dim Q_i-\dim Q'_i)/2\big)\oplus M(Q'_{i-1})(\dim Q_i-\dim Q'_{i-1}),
$$
where, as in Corollary \ref{cor6}, $Q'_j$ is the variety of $j$-dimensional totally isotropic subspaces
of a quadratic form $q'$ over the field $F(Q_1)$ such that $q_{F(Q_1)}$
is isomorphic to the orthogonal sum of $q'$ and a hyperbolic plane.
Since $n':=\dim q'=n-2<n$, the variety $Q'_{i-1}$ is
$2$-incompressible by the induction hypothesis (more precisely, the
induction hypothesis is applied if $i\geq2$, for $i=1$ the statement if
trivial).
Indeed, since the extension $F(Q_1)/F$ is a tower of a purely transcendental extension followed by a quadratic one,
the degree of any closed point on $Q'_{i-1}$ is
divisible by $2^{i-1}$;
the Witt index of $q'_{F(Q_1)(Q'_{i-1})}$ is $i-1$, that is,
the Witt index of $q_{F(Q_1)(Q'_{i-1})}$ is $i$ because the field extension
$F(Q_1)(Q'_{i-1})(Q_i)/F(Q_i)$ is purely transcendental.

By \cite[Theorem 1.1]{outer} (cf. \cite{gog}), it follows that the motive of
$Q'_{i-1}$ decomposes in a direct sum of one copy of $U(Q'_{i-1})$, shifts of
$U(Q'_j)$ with various $j\geq i$, and (in the case of even $n$ and non-trivial $\disc q$)
shifts of $U(Q'_{j\,K})$ with $j\geq i-1$ (where $K/F$ is the quadratic field extension corresponding to $\disc q$).
The motive of $Q'_i$ decomposes in a direct sum of shifts of
$U(Q'_j)$
and (in the case of even $n$ and non-trivial $\disc q$)
shifts of $U(Q'_{j\,K})$
with various $j\geq i$.
Note that the condition on the Witt index of the form $q_{F(Q_i)}$ ensures
that for any $j\geq i$ the motive $U(Q'_{i-1})$ is not isomorphic to
$U(Q'_j)$ (and $U(Q'_{i-1})\not\simeq U(Q'_{j\,K})$ anyway).
Therefore the complete motivic decomposition of $Q_{i\,F(Q_1)}$ contains
one copy of $U(Q'_{i-1})$, one copy of $U(Q'_{i-1})(\dim Q_i-\dim
Q'_{i-1})$ and no other shifts of $U(Q'_{i-1})$.

The complete decomposition of $U(Q_i)_{F(Q_1)}$
contains the summand $U(Q'_{i-1})$.
If it also contains the second (shifted) copy of $U(Q'_{i-1})$, then
$\cdim_2Q_i=\dim Q_i$ by \cite[Theorem 5.1]{canondim}, and we are done.
Otherwise,
by \cite[Lemma 1.2 and Remark 1.3]{gog},
$\cdim_2Q_i=\cdim_2Q'_{i-1}=\dim Q'_{i-1}$,
and we get by Corollary \ref{cor6} that $Q_i$ has a closed point of degree not divisible by $2^i$.
\end{proof}


\def\cprime{$'$}

\end{document}